\renewcommand{\theequation}{\thesection.\arabic{equation}}
\newtheorem{thm}{Theorem}[section]
\newtheorem{lem}[thm]{Lemma}
\newtheorem{rem}[thm]{Remark}
\begin{document}
\newcommand{\BX}{{\bf X}}
\newcommand{\cv}{{\cal V}}
\newcommand{\cW}{{\cal W}}
\newcommand{\co}{{\cal O}}

\renewcommand{\theequation}{\thesection.\arabic{equation}}
\def\@eqnnum{{\reset@font\rm (\theequation)}}

\def\abstract{
\advance \rightskip by 10mm
\advance \leftskip by 10mm
\vspace{-0.8em}
\noindent
\small{\bf Abstract.}
}
\def\endabstract{\par\normalsize\rm}

\def\Xint#1{\mathchoice
{\XXint\displaystyle\textstyle{#1}}%
{\XXint\textstyle\scriptstyle{#1}}%
{\XXint\scriptstyle\scriptscriptstyle{#1}}%
{\XXint\scriptscriptstyle\scriptscriptstyle{#1}}%
\!\int}
\def\XXint#1#2#3{{\setbox0=\hbox{$#1{#2#3}{\int}$}
\vcenter{\hbox{$#2#3$}}\kern-.5\wd0}}
\def\ddashint{\Xint=}
\def\dashint{\Xint-}

%Greek Letters
\def\a{\alpha}
\def\b{\beta}
\def\d{\delta}\def\D{\Delta}
\def\e{\epsilon}
\def\g{\gamma}\def\G{\Gamma}
\def\k{\kappa}
\def\lam{\lambda}\def\Lam{\Lambda}
\renewcommand\o{\omega}\renewcommand\O{\Omega}
\def\s{\sigma}\def\S{\Sigma}
\renewcommand\t{\theta}\def\vt{\vartheta}
\newcommand{\vphi}{\varphi}
\def\z{\zeta}

\newcommand{\tsigma}{\tilde{\s}}
\newcommand{\tbsigma}{\tilde{\bsigma}}
\def\te{\tilde{\e}}
\def\tu{\tilde{u}}

\newcommand{\bchi}{\mbox{\boldmath$\chi$}}
\newcommand{\bdelta}{\mbox{\boldmath$\delta$}}
\newcommand{\bepsilon}{\mbox{\boldmath$\epsilon$}}
\newcommand{\bfeta}{\mbox{\boldmath$\eta$}}
\newcommand{\bgamma}{\mbox{\boldmath$\gamma$}}
\newcommand{\bomega}{\mbox{\boldmath$\omega$}}
\newcommand{\bvphi}{\mbox{\boldmath$\varphi$}}
\newcommand{\bphi}{\mbox{\boldmath$\phi$}}
\newcommand{\bPhi}{\mbox{\boldmath$\Phi$}}
\newcommand{\bpsi}{\mbox{\boldmath$\psi$}}
\newcommand{\bPsi}{\mbox{\boldmath$\Psi$}}
\newcommand{\bsigma}{\mbox{\boldmath$\sigma$}}
\newcommand{\btau}{\mbox{\boldmath$\tau$}}
\newcommand{\bxi}{\mbox{\boldmath$\xi$}}
\newcommand{\brho}{\mbox{\boldmath$\rho$}}
\newcommand{\bbeta}{\mbox{\boldmath$\beta$}}
\newcommand{\bzeta}{\mbox{\boldmath$\zeta$}}

\def\bk{\boldsymbol{\kappa}}
\def\bmu{\boldsymbol\mu}
\def\bxi{\boldsymbol{\xi}}
\def\bz{\boldsymbol{\zeta}}
%%%%%%%%

%English Letters
\def\ba{{\bf a}}
\def\bb{{\bf b}}
\def\bc{{\bf c}}
\def\be{{\bf e}}
\def\bff{{\bf f}}
\def\bg{{\bf g}}
\def\bn{{\bf n}}
\def\bp{{\bf p}}
\def\bq{{\bf q}}
\def\bs{{\bf s}}
\def\bt{{\bf t}}
\def\bu{{\bf u}}
\def\bv{{\bf v}}
\def\bw{{\bf w}}
\def\bx{{\bf x}}
\def\by{{\bf y}}
\def\bzz{{\bf z}}

\def\bD{{\bf D}}
\def\bE{{\bf E}}
\def\bF{{\bf F}}
\def\bH{{\bf H}}
\def\bJ{{\bf J}}
\def\bV{{\bf V}}
\def\bU{{\bf U}}
\def\bW{{\bf W}}
\def\bX{{\bf X}}
\def\bY{{\bf Y}}

\def\cA{{\cal A}}
\def\cC{{\cal C}}
\def\cD{{\cal D}}
\def\cE{{\cal E}}
\def\cF{{\cal F}}
\def\cG{{\cal G}}
\def\cI{{\cal I}}
\def\cJ{{\cal J}}
\def\cK{{\cal K}}
\def\cL{{\cal L}}
\def\cO{{\cal O}}
\def\cP{{\cal P}}
\def\cQ{{\cal Q}}
\def\cR{{\cal R}}
\def\cS{{\cal \Sigma}}
\def\cT{{\cal T}}
\def\cU{{\cal U}}
\def\cV{{\cal V}}

\def\scT{{_\cT}}
\def\sD{{_D}}
\def\sE{{_E}}
\def\sF{{_F}}
\def\sFz{{_{F_z}}}
\def\sK{{_K}}
\def\sI{{_I}}
\def\sb{{_b}}
\def\sN{{_N}}

\def\curl{{{\bf curl} \ }}
\def\rot{{\mbox{rot}\ }}
\def\BPI{{\bf \Pi}}

\def\cth{\cT_h}
\def\ctH{\cT_H}

\def\tJ{\tilde{\J}}

\def\hK{\widehat{K}}
\def\hx{\widehat{x}}
\def\hy{\widehat{y}}
\def\bhv{\widehat{\bv}}
%%%%%%%%%

%math symbols
\def\l{\ell}
\def\bl{\boldsymbol{\ell}}
\def\col{\colon}
\def\f12{\frac12}
\def\dfrac{\displaystyle\frac}
\def\dint{\displaystyle\int}
\def\nab{\nabla}
\def\p{\partial}
\def\sm{\setminus}
\def\dsum{\displaystyle\sum}
\newcommand{\pp}[2]{\frac{\partial {#1}}{\partial {#2}}}
\def\bzero{{\bf 0}}

\def\divv{\nab\cdot}
\def\divx{\nab_x\cdot}
\def\divtx{\nab_{t,x}\cdot}
\def\nabx{\nab_x}

\newcommand{\grad}{\nabla}
\newcommand{\curlt}{{\nabla \times}}
\newcommand{\gperp}{\nabla^{\perp}}
\newcommand{\gradt}{\nabla\cdot}

\def\forallqq{\quad\forall\,}
\def\aph{A^{1/2}}
\def\amh{A^{-1/2}}

\def\osc{{\rm osc \, }}

\def\Im{{\rm Im}}
\newcommand{\tr}{{\rm tr}}
\def\divvr{{\rm div}}
\def\curllr{{\rm curl}}
\def\curll{{\rm curl}}
\def\curl{{\bf curl}}
\newcommand{\bgrad}{{\bf grad}}
\newcommand\diam{\mathrm{diam\,}}
\renewcommand\Im{\mathrm{Im\,}}
\def\Span{\mbox{Span}}
\def\supp{\mbox{supp\,}}
\newcommand{\trace}{{\rm trace}}

\newcommand{\tri}{|\!|\!|}
\newcommand{\ljump}{\lbrack\!\lbrack}
\newcommand{\rjump}{\rbrack\!\rbrack}
%%%%%%%%%%%%%%%%%%%%%%%%%%%%%%%%%%%%%%%%%%%%%%
\newcommand{\bdm}{\begin{displaymath}}
\newcommand{\edm}{\end{displaymath}}
\newcommand{\beq}{\begin{equation}}
\newcommand{\eeq}{\end{equation}}
\newcommand{\beqa}{\begin{eqnarray}}
\newcommand{\eeqa}{\end{eqnarray}}
\newcommand{\beqas}{\begin{eqnarray*}}
\newcommand{\eeqas}{\end{eqnarray*}}
%misc
\newcommand{\ul}{\underline}
\newcommand{\wh}{\widehat}
\newcommand{\la}{\langle}
\newcommand{\ra}{\rangle}

%Spaces
\newcommand{\Lt}{L^2(\Omega)}
\newcommand{\Lts}{L^2(\Omega)^2}
\newcommand{\Ltc}{L^2(\Omega)^3}
\newcommand{\Ho}{H^1(\Omega)}
\newcommand{\Hoh}{H^1(\wh{\Omega})}
\newcommand{\Hoi}{H^1(\Omega_i)}
\newcommand{\Hos}{H^1(\Omega)^2}
\newcommand{\Hoc}{H^1(\Omega)^3}
\newcommand{\Hoch}{H^1(\wh{\Omega})^3}
\newcommand{\Hoci}{H^1(\Omega_i)^3}
\newcommand{\Hoz}{H^1_0(\Omega)}
\newcommand{\Ht}{H^2(\Omega)}
\newcommand{\Hti}{H^2(\Omega_i)}
\newcommand{\Hts}{H^2(\Omega)^2}
\newcommand{\Htc}{H^2(\Omega)^3}
\newcommand{\Htz}{H^0(\Omega)}
\newcommand{\Hh}{H^{1/2}(\Gamma)}
\newcommand{\Hhi}{H^{1/2}(\Gamma_i)}
\newcommand{\Hmh}{H^{-1/2}(\Gamma)}
\newcommand{\Hdiv}{H(\divvr;\,\Omega)}
\newcommand{\Hdivh}{H(\divv;\,\wh \Omega)}
\newcommand{\hcurl}{H(\curl\,A;\,\Omega)}
\newcommand{\Hcurl}{H(\curll\,A;\,\Omega)}
\newcommand{\Hcrl}{H(\curll\,;\,\Omega)}
\newcommand{\hcrl}{H(\curl\,;\,\Omega)}
\newcommand{\Hcrlh}{H(\curll\,;\,\wh\Omega)}
\newcommand{\hcrlh}{H(\curl\,;\,\wh\Omega)}
\newcommand{\Wdiv}{\BW_0(\mbox{\divv}\,;\,\Omega)}
\newcommand{\Wcurl}{\BW_0(\mbox{\curl}\,A;\,\Omega)}
\newcommand{\WcrossV}{\BW \times V}

\def\calS{{\cal S}}
\def\calT{{\cal T}}
\def\cB{{\cal B}}
\def\cH{{\cal H}}
\def\ba{{\mathbf{a}}}
\def\cM{{\cal M}}
\def\cN{{\mathcal{N}}}
\def\cE{{\mathcal{E}}}
\def\cT{{\mathcal{T}}}

\def\bE{{\bf E}}
\def\bS{{\bf S}}
\def\br{{\bf r}}
\def\bW{{\bf W}}
\def\bLambda{{\bf \Lambda}}

\newcommand{\lJump}{[\![}
\newcommand{\rJump}{]\!]}
\newcommand{\jump}[1]{[\![ #1]\!]}

\newcommand{\sd}{\bsigma^{\Delta}}
\newcommand{\st}{\tilde{\bsigma}}
\newcommand{\sh}{\hat{\bsigma}}
\newcommand{\rd}{\brho^{\Delta}}

\newcommand{\WH}{W\!H}
\newcommand{\NE}{N\!E}

\newcommand{\ND}{N\!D}
\newcommand{\BDM}{B\!D\!M}

\newcommand{\sT}{{_T}}
\newcommand{\sRT}{{_{RT}}}
\newcommand{\sBDM}{{_{BDM}}}
\newcommand{\sWH}{{_{WH}}}
\newcommand{\sND}{{_{ND}}}
\newcommand{\sV}{_\cV}

\newcommand{\dd}{\underline{{\mathbf d}}}
\newcommand{\C}{\rm I\kern-.5emC}
\newcommand{\R}{\rm I\kern-.19emR}
\newcommand{\W}{{\mathbf W}}
\def\3bar{{|\hspace{-.02in}|\hspace{-.02in}|}}
\newcommand{\A}{{\mathcal A}}

\title [BDM Mixed FEM in MATLAB]{An Efficient Implementation of 
Brezzi-Douglas-Marini (BDM) 
Mixed Finite Element Method in MATLAB}
\author[S. Zhang]{Shun Zhang}
\address{Department of Mathematics, City University of Hong Kong, Kowloon Tong, Hong Kong SAR, China}
\email{shun.zhang@cityu.edu.hk}
\urladdr{http://personal.cityu.edu.hk/~szhang26}
\thanks{This work was supported in part by
Research Grants Council of the Hong Kong SAR, China under the GRF Grant Project No. 11303914, CityU 9042090}

 \date{\today}

\keywords{MATLAB; mixed finite element method; Brezzi-Douglas-Marini element; Raviart-Thomas element; BDM element; RT element}

\maketitle
\begin{abstract}
In this paper, a MATLAB package \verb bdm_mfem  for a linear Brezzi-Douglas-Marini (BDM) mixed finite element 
method is provided for the numerical solution of elliptic diffusion problems with mixed boundary conditions on 
unstructured grids. BDM basis functions  defined by standard  barycentric coordinates are used in the paper. Local and global edge ordering are treated carefully. MATLAB build-in functions and vectorizations are used to guarantee the erectness of the programs. The package is simple and efficient, and can be easily adapted for more complicated edge-based finite element spaces.
A numerical example is provided to illustrate the usage of the package.

\end{abstract}
%\keywords{matlab; mixed finite element method; Brezzi-Douglas-Marini element; BDM element}

%\maketitle
\section{Introduction}\label{intro}
\setcounter{equation}{0}

In recent years, MATLAB is widely used in the numerical simulation and is proved to be an excellent tool for 
academic educations. For example, Trefethen's book on spectral methods \cite{Tre:00} is extremely popular.
In the area of finite element method, there are several papers on writing clear, short, and easily adapted MATLAB 
codes, for example \cite{ACF:99,BC:05,Chen:09,FPW:11}.  Vectorizations are used in \cite{Chen:09} and \cite{FPW:11}
to guarantee the effectiveness of the MATLAB finite elements codes. The mixed finite element \cite{RaTh:77,BDM:85,BBF:13} is 
now widely used in many area of scientific computation. For example, in \cite{CaZh:09, CaZh:10a, CaZh:10b,CaZh:12,CaZh:15}, we use 
RT(Rviart-Thomas)/BDM(Brezzi-Douglas-Marini) space to build recovery-based a posteriori error estimators. On the other side, except for  the 
clear presentation of \cite{BC:05} on $RT_0$, the implementation of more complicated BDM elements is  still somehow confusing for 
researchers and students. The purpose of this paper is to fill this gap by giving a simple, efficient, and easily adaptable MATLAB 
implementation of $\BDM_1$-$P_0$ mixed finite element methods for of elliptic diffusion problems with mixed boundary conditions on 
unstructured grids.

For linear BDM elements,  there are several versions to write the basis functions explicitly. In \cite{BBF:13,SSD:03}, basis functions are defined on each 
elements using heights and normal vectors. This version of basis functions is less straightforward than the basis functions defined by 
barycentric coordinates. After all, everyone is very familiar with  barycentric coordinates in  finite element  programmings. Thus, in this 
implementation, we will use the definition which only uses barycentric coordinates. 

There are two basis functions on each edge for linear BDM elements. Unlike the RT element, BDM basis functions depend on the stating and terminal points of the edge. Thus, when assembling the local matrix, for a local edge on each element, we need to make sure we find its correct global stating and terminal points of the edge. There is an 
implementation of BDM element in $i$FEM package \cite{Chen:09} \texttt{https://bitbucket.org/ifem/ifem/}. But in order to make the local ordering of edges in an element is the same as the global ordering of edges, the triangles are not always counterclockwisely oriented. This will cause confusion for programmers. And if we use this ordering, sometime we may need two kinds of element map, one is counterclockwisely ordered, the other is ordered by the indices of vertices.  This will make things more complicated. In this implementation, we will use the standard counterclockwisely ordering of  vertices of triangles.

Besides these issues, to get the right convergence order, we need to handle the boundary conditions carefully. In this package, we use the basic data structure of $i$FEM \cite{Chen:09}, and full MATLAB vectorization is used.

In a summery, in this package:
\begin{enumerate}
\item $\BDM_1$ basis functions are explicitly defined using barycentric coordinates.
\item Elements are still positively oriented. A function is used to find the right global stating and terminal vertices of an edge in a local element. 
\item Mixed type of boundary conditions are handled correctly to guarantee the right order of convergence.
\item MATLAB build-in functions and vectorizations are used to guarantee the erectness of the programs. 
\end{enumerate}

The package can be download from \url{http://personal.cityu.edu.hk/~szhang26/bdm_mfem.zip}.

The paper is organized as follows. Section 2 describes the model diffusion
problem, its $\BDM_1$-$P_0$ mixed finite element approximation and the corresponding matrix problem.
Section 3 introduces a simple example problem to demonstrate out MATLAB code.
In Section 4, edge-based linear  BDM basis functions  are defined.
The main part of the code for the matrix problem is discussed in Section 6.
MATLAB functions to checking the errors are discussed in Section 7. 
Finally, we discuss some related finite elements in Section 8.

\section{Model problem and BDM1-P0 mixed finite element method}\label{modelproblem}
\setcounter{equation}{0}
\subsection{Model problem}
Let $\O$ be a bounded polygonal domain in $\Re^2$,
with boundary $\p \O = \bar{\Gamma}_\sD \cup\bar{ \Gamma}_\sN$,
$\Gamma_\sD\cap \Gamma_\sN = \emptyset$,  and $\mbox{measure}\,(\Gamma_D)\not= 0$,
and let $\bn$ be the outward unit vector normal to the boundary.
For a vector function $\btau = (\btau_1, \btau_2)$, define the divergence and curl operators by
$\gradt \btau = \frac{\p \btau_1}{\p x_1} + \frac{\p \btau_2}{\p x_2}$ and 
$\grad\times \btau = \frac{\p \btau_2}{\p x_1} - \frac{\p \btau_1}{\p x_2}$. For a function $v$, define the gradient and rotation operators by $\grad v = (\frac{\p v}{\p x_1}, \frac{\p v}{\p x_2})^t$ and  $\gperp v = (\frac{\p v}{\p x_2}, -\frac{\p v}{\p x_1})^t$. 

Consider diffusion equation
\begin{equation}\label{pde}
    -\gradt (\a(x) \grad u)  =  f   \quad\mbox{in} \quad  \O\\
\end{equation}
with boundary conditions
\beq\label{bc}
    -\a    \grad u \cdot \bn = g_N \quad\mbox{on} \quad \Gamma_N \quad
    \mbox{and}\quad u = g_D \quad\mbox{on}\quad \Gamma_D.
\eeq 
We assume that the right-hand side $f \in L^2(\O)$,  $g_D$ in $H^{1/2}(\Gamma_D)$,  
and that $g_N \in L^2(\Gamma_N)$, and that $\a(x)$ is a positive piecewise constant function.
Define the flux by $ \bsigma = -\a(x)\grad u$, then we have 
\beq \label{asg}
\a^{-1} \bsigma = -\grad u \quad \mbox{and} \quad \gradt \bsigma = f.
\eeq
Define the standard $\Hdiv$ spaces as 
\begin{eqnarray*}
\Hdiv &:=& \{\btau \in L^2(\O)^2 : \gradt \btau \in L^2(\O)\},\\
H_N(\divvr;\O)&:=& \{\btau \in \Hdiv : \btau \cdot \bn =0 \mbox{ on } \Gamma_N\}.
\end{eqnarray*}
Multiply the first equation in (\ref{asg}) by a $\btau \in  H_N(\divvr;\O)$ and
integrating by parts, we get
$$
(\a^{-1}\bsigma, \btau) = -(\grad u, \btau) = (\gradt\btau, v) - (\btau\cdot\bn , g_D)_{\Gamma_D}
$$
where 
$(\cdot,\cdot)_{\omega}$ is the $L^2$ inner product on a domain $\omega$. If $\omega=\O$, we omit the subscript.
Then the mixed variational formulation is to find $(\bsigma,\,u)\in
H(\divvr;\O)\times L^2(\O)$ with $\bsigma\cdot\bn = g_N$ on $\Gamma_N$, such that
\begin{equation}\label{mixed}
 \left\{\begin{array}{lclll}
 (\a^{-1}\bsigma,\,\btau)-(\divv \btau,\, u)&=&- (\btau\cdot\bn , g_D)_{\Gamma_D} \quad & \forall\,\, \btau \in
 H_N(\divvr;\O),\\[2mm]
 (\divv \bsigma, \,v) &=& (f,\,v)&\forall \,\, v\in L^2(\O).
\end{array}\right.
\end{equation}
The existence, uniqueness, and stability results of  (\ref{mixed}) are well-known, and can be found in standard references, for example, \cite{BBF:13}.

\subsection{$\BDM_1$-$P_0$ mixed formulation}
Let $\cT = \{K\}$ be a regular triangulation of the domain $\O$.
% Denote the set of all nodes of the triangulation by
%$ \cN := \cN_I \cup\cN_D \cup\cN_N,
%$ where $\cN_I$ is the set of all interior nodes and
%$\cN_D$ and $\cN_N$ are the sets of all boundary nodes
%belonging to the respective $\overline{\Gamma}_D$ and $\Gamma_N$.
Denote the set of all edges of the triangulation by $
 \cE := \cE_I\cup\cE_D\cup\cE_N,
$ where $\cE_I$ is the set of all interior element edges and
$\cE_D$ and $\cE_N$ are the sets of all boundary edges
belonging to the respective $\Gamma_D$ and $\Gamma_N$. 
For any element $K\in\cT$, denote by $P_k(K)$ the space of polynomials on $K$ 
with total degree less than or equal to $k$. 
The $\Hdiv$ conforming  Brezzi-Douglas-Marini (BDM) space \cite{BDM:85} of the lowest order is defined by
$$
\BDM_1=\{\btau :\btau |_K \in \BDM_1(K),     \forall K \in \cT \} \quad \mbox{with}\quad \BDM_1(K)=P_1(K)^2.
$$
and let $\BDM_{1,N} = \BDM_1\cap H_N(\divvr;\O)$. The piecewise constant space $P_0$ is defined by 
$$
P_0 =\{v : v |_K \in P_0(K),  \;   \forall K \in \cT \}.
$$
For simplicity, we further assume that $\a$ is a positive constant in each element $K\in \cT$.
Let $\cT_N$ be the one dimensional mesh induced by $\cT$ on $\Gamma_N$.
Define
$$
P_1(\cT_N) = \{ v : v |_E \in P_1(E),   \;\; \forall E \in \cT_N \}.
$$
Let  $g_{N,h}$ be the $L^2$ projection of  $g_N$ on $P_1(\cT_N)$.
The $\BDM_1$-$P_0$ mixed finite element discrete problem then is to find $(\bsigma_h, u_h) \in
BDM_1 \times P_0$ with $\bsigma_h\cdot\bn =g_{N,h}$ on $\Gamma_N$, such that
\begin{equation}\label{problem_mixed}
 \left\{\begin{array}{lclll}
 (\a^{-1}\bsigma_h,\,\btau_h)-(\divv \btau_h,\, u_h)&=& - (\btau_h\cdot\bn , g_D)_{\Gamma_D}
 \quad & \forall\,\, \btau_h \in \BDM_{1,N},\\[2mm]
- (\divv \bsigma_h,\, v_h) &=& -(f,\,v_h)&\forall \,\, v_h\in P_0.
\end{array}\right.
\end{equation}
The discrete problem (\ref{problem_mixed}) has a unique solution, and the following error estimates hold assuming that the solution $(\bsigma, u)$ has enough regularity:
\beq \label{apriori}
\|\a^{-1/2} (\bsigma-\bsigma_h)\|_0 \leq Ch^2 \|\a^{-1/2}\bsigma\|_2 \quad \mbox{and}\quad
\|u-u_h\|_0 \leq C (h\|u\|_1 +  h^2 \|\bsigma\|_2 ),
\eeq
where $h$ is the mesh size and $\|\cdot\|_k$ is the standard norm of Sobolev space $H^k$. Thus, we should expect order 2 convergence of $\bsigma$ and order 1 convergence of $u$ if the solution is smooth enough. 

Let $\bsigma_N \in \BDM_1$ be the interpolation 
of $g_{N,h}$ on $\Gamma_N$ (The  construction of $\bsigma_N$ will be explained in Section \ref{BCrhside} in detail). Then $\bsigma_h = \bsigma_N + \bsigma_0$, with $\bsigma_0\in \BDM_{1,N}$ solves the following discrete problem:
\beq \label{discrevp}
\left\{\begin{array}{llll}
(\a^{-1} \bsigma_0, \btau_h) -(\divv \btau_h,\, u_h)&=& - (\btau_h\cdot\bn , g_D)_{\Gamma_D} - (\a^{-1} \bsigma_N, \btau_h) &
   \forall   \;\btau_h \in \BDM_{1,N},\\[2mm]
- (\divv \bsigma_0,\, v_h) &=& -(f,\,v_h)+(\divv \bsigma_N,\, v_h)& \forall \,\, v_h\in P_0.
\end{array}
\right.
\eeq

\subsection{Matrix problem}
Suppose that all edges are uniquely defined with fixed initial and terminal vertices, in Section \ref{section:BDMbasis}, we will define two basis 
functions $\bphi_{j,1}$ and $\bphi_{j,2}$ for an edge $E_j\in \cE$, $j = 1,\cdots, \NE$, with $\NE$ is the number of edges. 
For simplicity, we assume that the total number of all edges in $\cE_I$ and $\cE_D$ is $M$, and 
$\BDM_{1,N} = \mbox{span}\{ \bphi_{j,1}, \bphi_{j,2}, j=1\cdots,M\}$ (The real mesh may has a different order of indices).
The basis of $P_0$ is very simple. For the element $K_j$, its basis is $1_j$ which is $1$ on $K_j$ and $0$ elsewhere.

We want to compute the coefficient vectors $\bx \in \R^{2NE}$ of $\bsigma_h$ and $\by \in \R^{NT}$ of $u_h$ with respect to the $\BDM_1$ basis and $P_0$ basis
\beq
\bsigma_h = \sum_{j=1}^{\NE} \left( x_{j}\phi_{j,1}+x_{\NE+j}\phi_{j,2}\right) \quad \mbox{and}\quad
u_h = \sum_{k=1}^{NT} y_k 1_k,
\eeq
and
\beq \label{sigmaN}
\bsigma_N = \sum_{j=M+1}^{\NE} ( x_{j}\phi_{j,1}+x_{\NE+j}\phi_{j,2})
\eeq
 with  $x_j$ and $x_{\NE+j}$ are determined by  $g_{N,h}$ (discussed later in Section \ref{BCrhside}). 
Then
\begin{eqnarray*}
%\begin{array}{llll}
&&\sum_{j=1}^M \left( x_{j}(\a^{-1}\bphi_{i,1}, \bphi_{j,1}) + x_{j+\NE}(\a^{-1}\bphi_{i,1}, \bphi_{j,2}) \right)
-\sum_{k=1}^{NT} y_k (\bphi_{i,1},1)_{K_k} \\
&&\quad=- (g_D, \bphi_{i,1}\cdot\bn_\O)_{\Gamma_D} - \sum_{j=M+1}^{\NE} (x_j(\a^{-1}\bphi_{i,1}, \bphi_{j,1})+x_{j+\NE} (\a^{-1}\bphi_{i,1}, \bphi_{j,2})); \\
&&\sum_{j=1}^M \left( x_{j}(\a^{-1}\bphi_{i,2}, \bphi_{j,1}) + x_{j+\NE}(\a^{-1}\bphi_{i,2}, \bphi_{j,2}) \right)
-\sum_{k=1}^{NT} y_k (\bphi_{i,2},1)_{K_k} \\
&&\quad=- (g_D, \bphi_{i,2}\cdot\bn_\O)_{\Gamma_D} - \sum_{j=M+1}^{\NE} (x_j(\a^{-1}\bphi_{i,2}, \bphi_{j,1})+x_{j+\NE} (\a^{-1}\bphi_{i,2}, \bphi_{j,2}));\\
&& -\sum_{j=1}^{M} \left( x_j (\bphi_{j,1},1)_{K_\ell} +x_{j+\NE} (\bphi_{j,2},1)_{K_\ell}\right) \\
&& \quad= -(f, 1)_{K_\ell} + 
 \sum_{j=M+1}^{\NE} \left (x_j(\gradt \bphi_{j,1},1)_{K_{\ell}}+x_{j+\NE} (\gradt \bphi_{j,2}, 1)_{K_\ell} \right); 
%\end{array}
\end{eqnarray*}
for $i=1,\cdots, M$, $\ell = 1,\cdots, NT$. The coefficients  $x_{j}$, $x_{j+\NE}$, $j=M+1,\cdots, \NE$ are known from $g_{N,h}$.
Rewrite it as a matrix problem, we have
\beq
\left(
\begin{array}{ccc}
  B & C^t     \\
  C&  0    
\end{array}
\right)
\left(
\begin{array}{ccc}
\bx    \\
\by    
\end{array}
\right) =
\left(
\begin{array}{ccc}
b1   \\
b2    
\end{array}
\right),
\eeq
where $B$ is a $2M\times 2M$ matrix and $C$ is a $NT\times 2M$ matrix with entries: 
\beq \begin{array}{lllllllll}
B_{i, j} = (\a^{-1}\bphi_{i,1}, \bphi_{j,1}), & B_{i, j+M} = (\a^{-1}\bphi_{i,1}, \bphi_{j,2}),  &
 B_{i+M, j} = (\a^{-1}\bphi_{i,2}, \bphi_{j,1}), 
 \\ B_{i+M, j+M} = (\a^{-1}\bphi_{i,2}, \bphi_{j,2}), & C_{\ell,j} = -(\gradt \bphi_{j,1}, 1_{\ell}), &
  C_{\ell, j+M} = -(\gradt \bphi_{ij2}, 1_{\ell}),
 \end{array}
 \eeq
 where  $i=1, \cdots, M$, $j=1,\cdots, M$, and $\ell = 1,\cdots, NT$;
 and the right hand side where $b1$ is a $2M\times 1$ vectors and  $b2$ is an $NT\times 1$ vector with entries
 \begin{eqnarray*}
b1_{i} &=& - (g_D, \bphi_{i,1}\cdot\bn_\O)_{\Gamma_D} - \sum_{j=M+1}^{\NE} (x_j(\a^{-1}\bphi_{i,1}, \bphi_{j,1})+x_{j+\NE} (\a^{-1}\bphi_{i,1}, \bphi_{j,2}));\\
b1_{i+M} &=& - (g_D, \bphi_{i,2}\cdot\bn_\O)_{\Gamma_D} - \sum_{j=M+1}^{\NE} (x_j(\a^{-1}\bphi_{i,2}, \bphi_{j,1})+x_{j+\NE} (\a^{-1}\bphi_{i,2}, \bphi_{j,2}));\\
b2_{\ell}  &=&  -(f, 1)_{K_\ell} + 
 \sum_{j=M+1}^{NE} (x_j(\gradt \bphi_{j,1},1)_{K_{\ell}}+x_{j+\NE} (\gradt \bphi_{j,2}, 1)_{K_\ell})
\end{eqnarray*}
where  $i=1, \cdots, M$ and $\ell = 1,\cdots, NT$;
 $(x_1, \cdots, x_M, x_{1+NE}, \cdots, x_{M+NE}, y_1,\cdots, y_{NT})^t$
 is the $2M+NT\times 1$ solution vector.

\section{A Numerical Example}\label{numericalexamples}
\setcounter{equation}{0}
We will demonstrate our MATLAB program by a simple test problem.
Let $\O = (-1,1)^2$, with $\Gamma_N = \{x\in (-1,1)\}\times\{y=1\}$, and the rest is $\Gamma_D$ The mesh is given in Fig. \ref{mesh}.
We choose the diffusion coefficient and the exact solution to be 
$$
\a = \left\{\begin{array}{lll}
10 & \mbox{if  } x<0; \\
1 & \mbox{if  } x>0; 
\end{array}
\right.
\quad \mbox{and}\quad
u(x,y) = \left\{\begin{array}{lll}
(x^2y^2+x)/10+y & \mbox{if  } x<0; \\
x^2y^2+x+y & \mbox{if  } x>0.
\end{array}
\right.
$$
The right-hand side $f=-2(x^2+y^2)$. 
The exact $\bsigma$ is 
$$
\bsigma (x,y) = \left\{\begin{array}{lll}
-(2xy^2+1, 2x^2y+10)^t & \mbox{if  } x<0; \\
-(2xy^2+1, 2x^2y+1)^t & \mbox{if  } x>0.
\end{array}
\right.
$$
It's clear that $\bsigma$ itself is not continuous, but its normal component is continuous.
The boundary conditions are
$$
u(x,y) = \left\{\begin{array}{lll}
 y^2/10+y-1/10& \mbox{if  } x=-1; \\
y^2+y+1 & \mbox{if  } x=1;\\ 
(x^2+x)/10-1 & \mbox{if  } x<0 \mbox{ and } y=-1; \\
x^2+x-1 & \mbox{if  } x>0\mbox{ and } y=-1;
\end{array}
\right.
$$
and
$$
-\a\grad u \cdot (0,1)^t = g_N= 
\left\{\begin{array}{lll}
-2x^2-10 & \mbox{if  } x<0 \mbox{ and } y=1; \\
-2x^2-1 & \mbox{if  } x>0 \mbox{ and } y=1.
\end{array}
\right.
$$
We choose this exact solution to emphasize that the flux $\bsigma$ is in $H(\divvr;\O)$, but not the $\grad u$.
The main MATLAB codes of $\a$, $f$, $g_D$ and $g_N$ are given in \mcode{exactalpha.m}, \mcode{f.m}, \mcode{gD.m}, and \mcode{gN.m} respectively. 
Here the Dirichlet boundary condition is given by the exact solution in \mcode{gD.m} for simplicity. 
\begin{lstlisting}[float,caption={exactalpha, f, gD, and gN}, label=exactalpha]
function z = exactalpha(p)
ix = (p(:,1)<0); z  = ones(size(p,1),1); z(ix) = 10.0;
end
function z = f(p) 
x = p(:,1); y = p(:,2); z = -2*(x.*x+y.*y);
end
function z = gD(p)
x = p(:,1); y = p(:,2); 
z=(x<0).*(0.1*x.*x.*y.*y+0.1*x+y)+(x>=0).*(x.*x.*y.*y+x+y);
end
function z = gN(p) 
x = p(:,1); y = p(:,2); z=(x<0).*(-2*x.*x-10)+(x>=0).*(-2*x.*x-1);
end
\end{lstlisting}

\begin{figure}[!hts]
\label{mesh}
    %\hfill
    \begin{minipage}[!hbp]{0.48\linewidth}
        \centering
        \includegraphics[width=0.99\textwidth,angle=0]{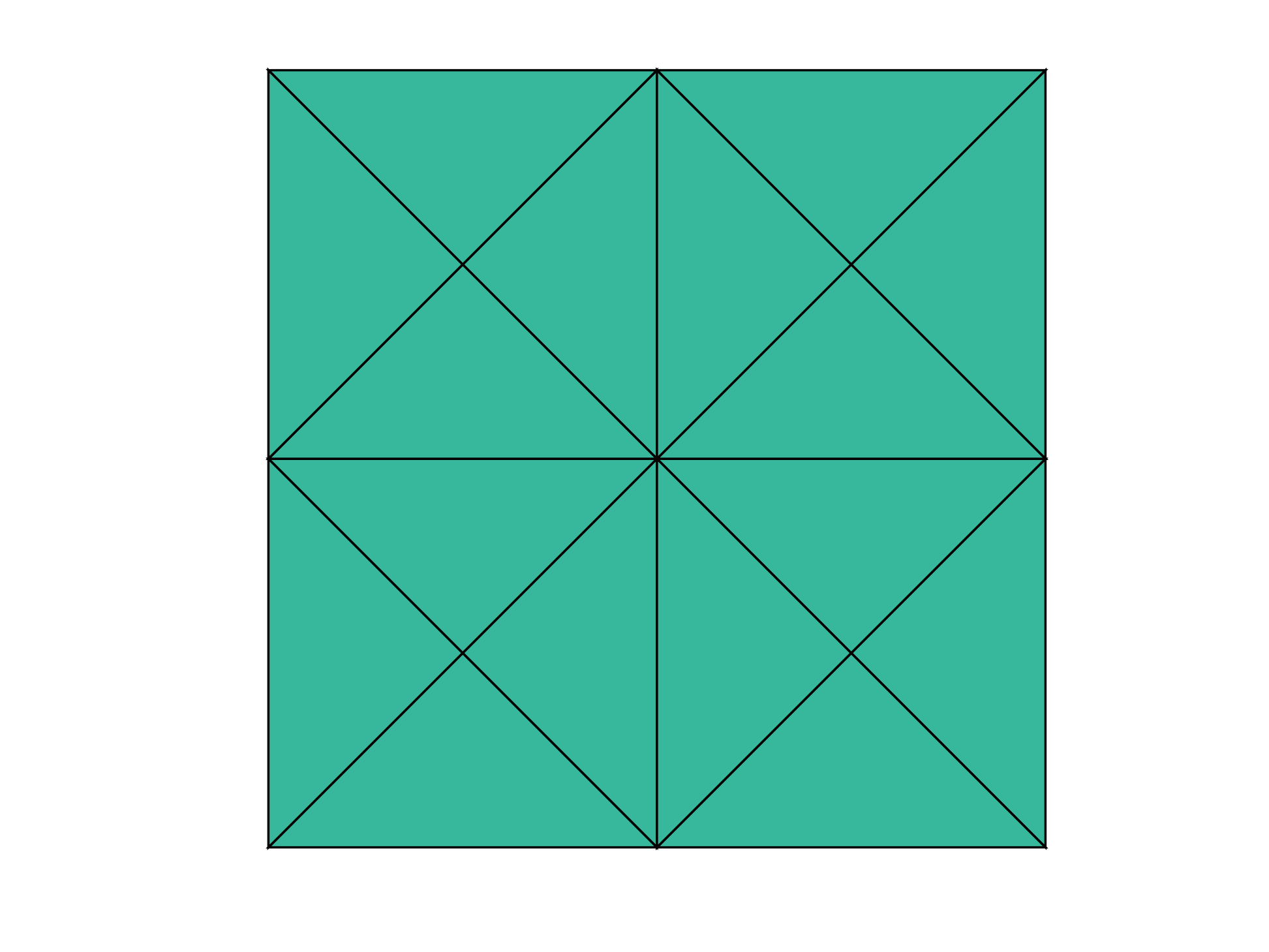}
        \caption{mesh of the example}%
        \end{minipage}%
        \label{mesh}
    %\hfill
    \begin{minipage}[!hbp]{0.48\linewidth}
        \centering
        \includegraphics[width=0.99\textwidth,angle=0]{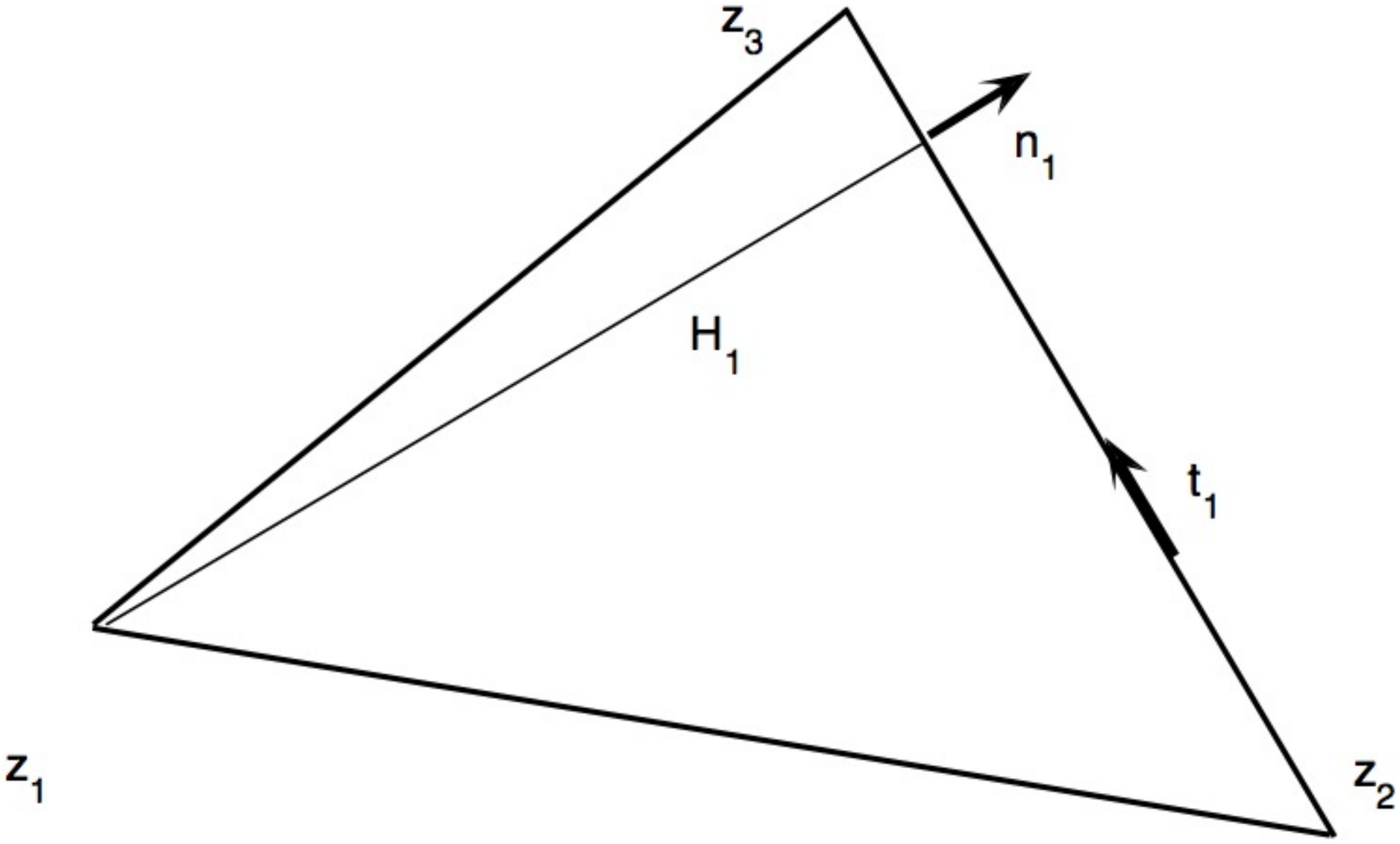}
        \caption{a triangle}%
        \end{minipage}%
        \label{triangle}
\end{figure}

The MATLAB program to solve this problem is given in \mcode{main.m}. 
Lines 2-7 read the essential geometric data of the problem. Line 9 generates the edges, and other necessary geometric relations.  Lines 11-12 solve the problem by the $\BDM_1$ mixed finite element method.

\begin{lstlisting}[float,caption={main.m}, label=main]
%% Geometry setting
node = [-1 1; 0 1; 1 1; -0.5 0.5; 0.5 0.5; -1 0; 0 0; 1 0; ...
    -0.5 -0.5; 0.5 -0.5;-1.0 -1.0; 0.0 -1.0;1.0 -1.0];
elem =[4 2 1;4 1 6;4 6 7;4 7 2;5 3 2;5 2 7;5 7 8;5 8 3;9 7 6;...
    9 6 11;9 11 12;9 12 7;10 8 7;10 7 12;10 12 13;10 13 8];
bdEdge = [2 0 0;1 0 0; 0 0 0;0 0 0;2 0 0;0 0 0;0 0 0;1 0 0;...
    0 0 0;1 0 0;1 0 0;0 0 0;0 0 0;0 0 0;1 0 0;1 0 0];
%% Geometric relations
[edge,elem2edge,signedge] = geomrelations(elem);
%%
[sigma,u] = diffusionbdm(node,elem,bdEdge,elem2edge,edge,...
     signedge,@exactalpha,@f,@gD,@gN);
\end{lstlisting}

\section{Triangulation and geometric data structures}\label{data}
\setcounter{equation}{0}
\subsection{Geometric description and geometric relations}
We follow \cite{Chen:09} for the data representation of the set of all vertices, the regular triangulation $\cT$, 
the edges, and the boundaries. 

The set of all vertices $\cN = \{z_1, \cdots, z_N\}$ is represented by an N$\times 2$ matrix   
\mcode{node(1:N,1:2)}, where $N$ is the number of vertices, and the $i$-th row of  \mcode{node} is the 
coordinates of the $i$-th vertex $z_i =(x_i, y_i)$, \mcode{node(i,:) = [xi,yi]}.  Lines 2-3 of \mcode{main.m} give  the \mcode{node} of our example. For example, the 1st vertex has coordinates $x_1=-1$ and $y_1 = 1$.

The triangulation $\cT$ is 
represented by an NT$\times 3$ matrix \mcode{elem(1:NT,1:3)} with NT the number of elements. 
The $i$-th element $K_i = \mbox{conv}\{z_i,z_j,z_k\}$
is stored as \mcode{elem(i,:) = [i j k]}, where the vertices are given in the counterclockwise order. Lines 4-5 of \mcode{main.m} give the \mcode{elem} of our example. For example, the 1st element $K_1$ has three vertices in the order of $z_4$, $z_2$ and $z_1$.

We call the the opposite edge of the $i$-th vertex, $i=1,2,3$ of a triangle the $i$-th edge of the triangle.

The matrix \mcode{bdEdge(1:NT, 1:3)} indicates which edge of an element is on the boundary of the domain.
 For a non-boundary edge, the value is $0$; the value is $1$ or $2$ for a Dirichlet or Neumann boundary edge respectively. Lines 4-5 of \mcode{main.m} give the \mcode{bdEdge} of our example. For example, the 1st element $K_1$ has three edge, the first edge is on the Neumann boundary, and the other two are not on the boundary.

\begin{lstlisting}[float,caption={geomrelations.m}, label=geomrelations]
function [edge,elem2edge,signedge] = geomrelations(elem)
NT = size(elem,1);
totalEdge = sort([elem(:,[2,3]); elem(:,[3,1]); elem(:,[1,2])],2);
[edge, useless, j] = unique(totalEdge, 'rows');
elem2edge = reshape(j,NT,3);

signedge = ones(NT,3);
signedge(:,1) = signedge(:,1) - 2* (elem(:,2)>elem(:,3));
signedge(:,2) = signedge(:,2) - 2* (elem(:,3)>elem(:,1));
signedge(:,3) = signedge(:,3) - 2* (elem(:,1)>elem(:,2));
end
\end{lstlisting}

The MATLAB code \mcode{geomrelations.m} generates \mcode{edge}, \mcode{elem2edge}, and \mcode{signedge}. The explanations of these codes can be found 
in \cite{Chen:09}.

The matrix \mcode{edge(1:NE,1:2)} is defined with NE the total number of edges, and the $k$-th edge $E_k$ with the starting vertex $z_i$ and the terminal 
vertex $z_j$ is stored as \mcode{edge(k,:) = [i j]}. We always ensure that \mcode{edge(k,1) < edge(k,2)}. Lines 3-4 of \mcode{geomrelations.m} 
generate  \mcode{edge}. For our example,  \mcode{NE=28} and \mcode{edge=[1     2; 1     4; 1     6;
     2     3;
     2     4;
     2     5;
     2     7;
     3     5;
     3     8;
     4     6;
     4     7;
     5     7;
     5     8;
     6     7;
     6     9;
     6    11;
     7     8;
     7     9;
     7    10;
     7    12;
     8    10;
     8    13;
     9    11;
     9    12;
    10    12;
    10    13;
    11    12;
    12    13]}. 
    For example, the 2nd edge's starting vertex is $z_1$ and terminal vertex is $z_4$.

For an edge $E_j$ with the starting vertex $z_s$ and the terminal vertex $z_t$, its normal direction is defined as 
$(y_t-y_s, x_s-x_t)^t/|E_j|$. 

The matrix \mcode{elem2edge(1:NT,1:3)}  is the matrix whose $k$-th row represents the 3 global indices of the edges in the order of local edges. 
Line 5 of \mcode{geomrelations.m} 
generates  \mcode{elem2edge}. For our example,   \mcode{elem2edge=
     [1     2     5;
     3    10     2;
    14    11    10;
     7     5    11;
     4     6     8;
     7    12     6;
    17    13    12;
     9     8    13;
    14    15    18;
    16    23    15;
    27    24    23;
    20    18    24;
    17    19    21;
    20    25    19;
    28    26    25;
    22    21    26].
}
For example, the 2nd element's three edges are $E_3$, $E_{10}$, and $E_2$.

Each edge has its fixed global orientation, while on each element, it has a local orientation, \mcode{localEdge=[2 3;3 1;1 2]}. 
The matrix \mcode{signedge} is an NT$\times 3$ matrix with value $1$ denoting the local and global orientations are the same, and $-1$ denoting that they are different. For our example,  \mcode{signedge=[-1     1    -1;
     1    -1    -1;
     1    -1     1;
    -1     1     1;
    -1     1    -1;
     1    -1    -1;
     1    -1     1;
    -1     1     1;
    -1     1    -1;
     1    -1    -1;
     1    -1     1;
    -1     1     1;
    -1     1    -1;
     1    -1    -1;
     1    -1     1;
    -1     1     1]}.

\subsection{Barycentric coordinate and its gradient}
 On an element $K$ with counterclockwise vertices $\{z_1,z_2,z_3\}$, we define the barycentric coordinate $\lambda_i =(a_i x+ b_i y +c_i)/(2|K|)$, $i=1,2,3$, such that 
$\lambda_i(z_j) = \delta_{ij}$. We only need to compute the gradient (and curl, rot, div) operators in this paper, so we only need to compute $a_i$, $b_i$, and the area $|K|$. The formulas are
$$
\grad \lambda_i = \dfrac{1}{2|K|} 
\left(
\begin{array}{ccc}
  y_{i+1} - y_{i+2} \\
  x_{i+2}-x_{i+1}
\end{array}
\right) \quad
\mbox{and}
\quad
2 |K|=
\det \left(
\begin{array}{ccc}
  x_2-x_1 & x_3-x_1 \\
  y_2-y_1& y_3-y_1
\end{array}
\right).
$$
The function \mcode{gradlambda.m} computes the coefficients \mcode{a}, \mcode{b}, and \mcode{area}. Here \mcode{a} and \mcode{b} are two NT$\times 3$ matrices, with each row stores the coefficients $a$ and $b$ for the three barycentric coordinates of corresponding 3 vertices.

\begin{lstlisting}[float,caption={gradlambda.m}, label=gradlambda]
function [a,b,area] = gradlambda(node,elem)
n1 = elem(:,1);	 n2 = elem(:,2);    n3 = elem(:,3);
NT = size(elem,1);   a = zeros(NT,3);   b = zeros(NT,3);
a(:,1) = node(n2,2)-node(n3,2);	 b(:,1) = node(n3,1)-node(n2,1);
a(:,2) = node(n3,2)-node(n1,2);	 b(:,2) = node(n1,1)-node(n3,1);
a(:,3) = node(n1,2)-node(n2,2);	 b(:,3) = node(n2,1)-node(n1,1);
area = (a(:,2).*b(:,3)-a(:,3).*b(:,2))/2.0;
end
\end{lstlisting}

\subsection{Normal and tangential vectors} \label{nortan}
On a local element $K$ with counterclockwise oriented vertices $\{z_{1}, z_2, z_3\}$, and $E_1 = \{z_2,z_3\}$, 
$E_2 = \{z_3,z_1\}$, and $E_3 = \{z_1,z_2\}$ (Figure \ref{triangle}), we will discuss the $\grad$ and $\gperp$ of $\lambda_2$  as examples:
$$
\gperp \lambda_2 =  \dfrac{1}{2|K|}
\left(
\begin{array}{ccc}
  y_3 - y_1   \\
  x_1- x_3 
\end{array}
\right)
\quad\mbox{and}
\quad
\gperp \lambda_2 =  \dfrac{1}{2|K|}
\left(
\begin{array}{ccc}
  x_1-x_3   \\
  y_1-y_3 
\end{array}
\right).
$$
where $|K|$ is the area of $K$.
On edge $E_1 =\{z_2,z_3\}$, the unit tangential and normal vectors are
$$
\bt_{E_1} = \dfrac{1}{|E_1|}
\left(
\begin{array}{ccc}
  x_3-x_2   \\
  y_3-y_2 
\end{array}
\right)
\quad\mbox{and}
\quad
\bn_{E_1} = \dfrac{1}{|E_{1}|}
\left(
\begin{array}{ccc}
  y_3-y_2   \\
  x_2-x_3
\end{array}
\right).
$$
where $|E|$ is the length of $E$.
From  Figure \ref{triangle},  we have
$\left(
\begin{array}{ccc}
  x_1-x_3   \\
  y_1-y_3 
\end{array}
\right) \cdot \bn_{E_1} =\left(
\begin{array}{ccc}
  y_3 - y_1   \\
  x_1- x_3 
\end{array}
\right) \cdot \bt_{E_1} = - H_1$, with $H_1$ is the height of $K_1$ on $E_1$. 
Now, by the fact  $2|K| = |E_1|\cdot H_1$, we have
$$
\gperp \lambda_2 \cdot\bn_{E_1} = \grad \lambda_2 \cdot\bt_{E_1}  =-1/|E_{1}|. 
$$
Similarly,
$$
\gperp \lambda_3 \cdot\bn_{E_1} = \grad \lambda_3 \cdot\bt_{E_1}  = 1/|E_{1}|. 
$$
On the hand, since the tangential and normal vectors of an edge are orthogonal, we have
\beq \label{orth}
\gperp \lambda_2 \cdot  \bn_{E_2} = \gperp \lambda_3 \cdot  \bn_{E_3} =0.
\eeq
Globally, on $E_{\ell} = \{z_s, z_t\}$ ($s<t$), the unit tangential and normal vectors are 
$$
\bt_{E_{\ell}} = \dfrac{1}{|E_{\ell}|}
\left(
\begin{array}{ccc}
  x_t-x_s   \\
  y_t-y_s 
\end{array}
\right)
\quad\mbox{and}
\quad
\bn_{E_{\ell}} = \dfrac{1}{|E_{\ell}|}
\left(
\begin{array}{ccc}
  y_t-y_s   \\
  x_s-x_t 
\end{array}
\right).
$$
%where $|E_\ell| $ is the length of $E_\ell$. 
We call the adjacent element whose out unit normal vector is the same as $\bn_{E_\ell}$ as $K^-_{\ell}$, and the element whose out unit normal vector is the opposite of $\bn_{E_\ell}$ as $K^+_{\ell}$.  On both $K_{\ell}^-$ and $K_{\ell}^+$, we have 
\beq \label{perpnormal}
\gperp \lambda_s \cdot\bn_{E_{\ell}}= \grad \lambda_s \cdot\bt_{E_{\ell}} = -1/|E_{\ell}| \quad\mbox{and}
\quad
\gperp \lambda_t \cdot\bn_{E_{\ell}} =
\grad \lambda_t \cdot\bt_{E_{\ell}}  =1/|E_{\ell}|.
\eeq

\section{Constructions of edge-based BDM basis functions} \label{section:BDMbasis}
\setcounter{equation}{0}

Let $\lambda_i$ be the standard linear Lagrange finite element basis function of the vertex $z_i$, i.e., it is piecewise linear on each element, globally continuous, $1$ at $z_i$ and $0$ at other vertices.

For an edge $E_{\ell} = \{z_s, z_t\}$, $s<t$, $1 \leq \ell\leq$ NE with the globally fixed  starting vertex $s$ and  terminal vertex $t$, its two $\BDM_1$ basis functions associated with the edge are
\beq \label{bdmbasis}
\bphi_{\ell,1} = \lambda_s \gperp \lambda_t \quad \mbox{and} \quad 
\bphi_{\ell,2} = -\lambda_t \gperp \lambda_s.
\eeq
It's clear that the basis functions are only non-zero in the two adjacent elements $K_{\ell}^-$ and $K_{\ell}^+$ (one element 
in the case of boundary) of $E_{\ell}$. 

\begin{lem}
There hold
\beq\label{propbdmbasis}
\bphi_{\ell,1} \cdot \bn_{E_{k}} |_{E_k} = \left\{ \begin{array}{ll}
0, & \mbox{if  } k \neq \ell;\\
\lambda_s/|E_\ell|, &  \mbox{if  }  k = \ell ;
\end{array}
\right.
\quad\mbox{and}
\quad
\bphi_{\ell,2} \cdot \bn_{E_{k}} |_{E_k}=  \left\{ \begin{array}{ll}
0, & \mbox{if  } k \neq \ell;\\
\lambda_t/|E_\ell|, &  \mbox{if  }  k = \ell,
\end{array}
\right.
\eeq
and
\beq \label{divBDM}
\gradt \bphi_{\ell,1} = \gradt \bphi_{\ell,2} = \dfrac{1}{2|K^-|} \mbox{ on } K^-\quad\mbox{and}\quad
\gradt \bphi_{\ell,1} = \gradt \bphi_{\ell,2} = -\dfrac{1}{2|K^+|} \mbox{ on } K^+.
\eeq
\end{lem}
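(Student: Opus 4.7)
The plan is to split the argument into two separate verifications, corresponding to the normal-trace identity~(\ref{propbdmbasis}) and to the divergence identity~(\ref{divBDM}). Both are purely local on a single triangle and exploit only the explicit barycentric form of the basis functions together with the formulas already collected in Section~\ref{nortan}.

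For the normal traces, fix $\bphi_{\ell,1}=\lambda_s\gperp\lambda_t$ and an edge $E_k$. Since $\bphi_{\ell,1}$ is supported in $K^-_\ell\cup K^+_\ell$, one may assume $E_k$ is an edge of one of these two triangles; otherwise both sides of~(\ref{propbdmbasis}) vanish. If $k\neq\ell$, then $E_k$ and $E_\ell$ are distinct edges of the same triangle and hence share exactly one vertex. If they share $z_s$, then $E_k$ is opposite $z_t$ in that triangle, and the orthogonality relation~(\ref{orth}) gives $\gperp\lambda_t\cdot\bn_{E_k}=0$. If they share $z_t$ instead, then $E_k$ is opposite $z_s$ and $\lambda_s\equiv 0$ on $E_k$. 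Either way $\bphi_{\ell,1}\cdot\bn_{E_k}=0$. When $k=\ell$, formula~(\ref{perpnormal}) yields $\gperp\lambda_t\cdot\bn_{E_\ell}=1/|E_\ell|$ on both adjacent triangles, whence $\bphi_{\ell,1}\cdot\bn_{E_\ell}=\lambda_s/|E_\ell|$. The companion identity for $\bphi_{\ell,2}=-\lambda_t\gperp\lambda_s$ follows by the same case analysis, the minus sign in the definition being cancelled by the matching sign $\gperp\lambda_s\cdot\bn_{E_\ell}=-1/|E_\ell|$ from~(\ref{perpnormal}).

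For the divergence, apply the product rule together with $\gradt\gperp v\equiv 0$ (equality of mixed partials). This yields $\gradt\bphi_{\ell,1}=\grad\lambda_s\cdot\gperp\lambda_t$ and $\gradt\bphi_{\ell,2}=-\grad\lambda_t\cdot\gperp\lambda_s$, and the antisymmetry $\grad f\cdot\gperp g=-\grad g\cdot\gperp f$ immediately forces these two scalars to coincide on each triangle, which already accounts for the equality $\gradt\bphi_{\ell,1}=\gradt\bphi_{\ell,2}$ appearing in~(\ref{divBDM}). Restricted to $K^-_\ell$, list the vertices counterclockwise as $(z_s,z_t,z_r)$ so that $\bn_{E_\ell}$ is the outward normal; substituting the explicit gradients from Section~\ref{data} into $\grad\lambda_s\cdot\gperp\lambda_t$ produces, in the numerator, exactly the twice-signed-area polynomial of $(z_s,z_t,z_r)$, hence $2|K^-|$, over the denominator $(2|K^-|)^2$, giving the claimed value $1/(2|K^-|)$.

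The main obstacle is merely bookkeeping orientations on $K^-_\ell$ versus $K^+_\ell$. On $K^+_\ell$ the pair $(z_s,z_t)$ runs against the counterclockwise orientation of that triangle, so the identical computation, relabelled into the actual counterclockwise order, picks up the opposite sign and returns $-1/(2|K^+|)$; equivalently, the antisymmetry relation reduces the $K^+_\ell$ case to a positively oriented computation with $s$ and $t$ swapped, which flips the sign automatically.
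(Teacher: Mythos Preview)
Your argument is correct and follows essentially the same route as the paper: for~(\ref{propbdmbasis}) you invoke~(\ref{orth}) and~(\ref{perpnormal}) together with the vanishing of the appropriate barycentric coordinate on the opposite edge, exactly as the paper does; for~(\ref{divBDM}) the paper merely asserts the result follows ``by recalling the definition of $\lambda_s$ and $\lambda_t$'' and the counterclockwise orderings on $K^\pm_\ell$, while you actually carry out that computation via the product rule, $\gradt\gperp\equiv 0$, and the antisymmetry $\grad f\cdot\gperp g=-\grad g\cdot\gperp f$. Your version is more explicit but not genuinely different in strategy.
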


\begin{proof}
By the discussion in Section \ref{nortan}, we have
$$
\gperp \lambda_s \cdot\bn_{E_{\ell}} = -1/|E_{\ell}| \quad\mbox{and}
\quad
\gperp \lambda_t \cdot\bn_{E_{\ell}} =1/|E_{\ell}|,
$$
and
$$
\bphi_{\ell,1} \cdot \bn_{E_{\ell}}|_{E_\ell} = \lambda_s /|E_{\ell}| \quad\mbox{and}
\quad
\bphi_{\ell,2} \cdot \bn_{E_{\ell}}|_{E_\ell} = \lambda_t/|E_{\ell}|.
$$
Assume $K_{\ell}^-$'s three counterclockwisely ordered vertices are $\{z_{r-}, z_s, z_t\}$.
We call edge with two endpoints $z_t$ and $z_{r-}$ to be $E_{t,r-}$, and the edge with two endpoint $z_{r-}$ and $z_s$ to be $E_{s,r-}$. Here  the orientations of these two edges are not important.
Then by (\ref{orth}), $\gperp \lambda_s\cdot \bn_{E_{t,r-}} =0$, and the fact $\lambda_t$ is zero on $E_{s,r-}$, we get 
$$
\bphi_{\ell,2}\cdot\bn_{E} = 0,  \mbox{  with  }  E = E_{s,r-} \mbox{  or  } E_{t,r-}.
$$
We can prove  (\ref{propbdmbasis}) is true for $\bphi_{\ell,1}$ and $K^+_{\ell}$ similarly.

The divergence part of the theorem can be easily proved by recalling the definition of $\lambda_s$ and $\lambda_t$ 
on $K^-_{\ell}$ and $K^{+}_{\ell}$, and notice that the counterclockwise ordering of vertices on $K^-_{\ell}$ is $\{z_{r-}, z_s, z_t\}$, and that on $K^+_{\ell}$ is $\{z_{r+}, z_t, z_s\}$.
\end{proof}

\begin{rem} \label{BDMRT}
There are other definitions of the $\BDM_1$ basis functions. 
One of the popular choice is the following:
\beq \label{hierarchical}
\bphi_{\ell,1} = \lambda_s \gperp \lambda_t - \lambda_t \gperp \lambda_s \quad \mbox{and} \quad 
\bphi_{\ell,2} =  \lambda_s \gperp \lambda_t+\lambda_t \gperp \lambda_s.
\eeq
The first basis function coincides with the $RT_0$ basis function with property $\bphi_{\ell,1}\cdot\bn_{E_{k}} = |E_{\ell}| \delta_{\ell k}$. This set of choices of basis functions is hierarchical. The reason we choose (\ref{bdmbasis}) is that it is symmetric. If the reader needs the basis to be  hierarchical, one should choose (\ref{hierarchical}). 
\end{rem}
\begin{rem}
The other versions of basis functions, for example, the RT basis function used in \cite{BC:05}, choose $\bphi^{rt}_{\ell} \cdot \bn_{E_k} = \delta_{k\ell}$. The issue of this choice of basis functions is that the mass matrix has a condition number 
$C h_{\max}/h_{\min}$, where $h_{\max}$ and $h_{\min}$ are the maximal and minimal diameters of the elements respectably. This will be a problem for  an adaptively generated meth. Though it can be easily fixed by preconditioning it with the inverse of the diagonal matrix of it, we avoid it by choosing basis in (\ref{bdmbasis}) or (\ref{hierarchical}).
\end{rem}

\section{Constructing and solving the matrix problem}\label{mfem}
\setcounter{equation}{0}

\subsection{Assembling matrices}

\subsubsection{Local matrices}
For an element $K$, we want to compute the local contributions of this element to the matrices $B$ and $C$.

We use  \mcode{localEdge=[2 3;3 1;1 2]} to denote the local edge with respect to the local indices of vertices. 
For the $i$-th edge, we let \mcode{ii1 = localEdge(i,1)} and    \mcode{ ii2 = localEdge(i,2)} to denote the local starting and terminal vertices of it.
When the local orientation and the global orientation of the edge $E$ is different (\mcode{signedge} of the edge is $-1$), 
we need to switch the local order to find the right global starting and terminal vertices of edge. 
For a given $i$-th local edge, by the line \mcode{i1 = (signedge(:,i)>0).*ii1+ (signedge(:,i)<0).*ii2}, we get \mcode{i1=ii1} if the local orientation and the  global orientation are the same, and \mcode{i1=ii2} otherwise. 
$i2$ can be done similarly.
Once we find the  $i1$ (local starting vertex) and $i2$ (local terminal vertex) with right global orientation, we can get the corresponding coefficients $a_{i1}$ and $b_{i1}$ of $\lambda_{i1} = (a_{i1}x+b_{i1}y+c_{i1})/(2|K|)$, and the same things for $i2$. 
The function \mcode{BDMrightorder.m} does the above job. With an input of $i = 1, 2$, or $3$ be the local vertex index of an element, this function returns the values \mcode{i1} and \mcode{i2}, which are the correct starting and terminal vertices of the corresponding edge with respect to the global fixed edge orientation, and \mcode{ai1,ai2,bi1,bi2} are the corresponding coefficients of $\lambda_{i1}$ and $\lambda_{i2}$.
 
\begin{lstlisting}[float,caption={BDMrightorder.m}, label=BDMrightorder]
function [i1,i2,ai1,bi1,ai2,bi2] = BDMrightorder(i,signedge,NT,a,b)
localEdge = [2 3; 3 1; 1 2];
ii1 = localEdge(i,1);               ii2 = localEdge(i,2);
i1 = (signedge(:,i)>0).*ii1+ (signedge(:,i)<0).*ii2;
i2 = (signedge(:,i)<0).*ii1+ (signedge(:,i)>0).*ii2;
ai1 = a((i1-1)*NT+(1:NT)');         ai2 = a((i2-1)*NT+(1:NT)');
bi1 = b((i1-1)*NT+(1:NT)');         bi2 = b((i2-1)*NT+(1:NT)');
end
\end{lstlisting}

To compute the local contribution of an element $K$ to $B$, the local mass matrix contains three cases, $(\a^{-1} \bphi_{i, 1}, \bphi_{j,1})$,  $(\a^{-1} \bphi_{i, 1}, \bphi_{j,2})$, and  $(\a^{-1} \bphi_{i, 2}, \bphi_{j,2})$ with 
$$
\bphi_{i, 1}= \lambda_{i1}\gperp \lambda_{i2}, \quad
\bphi_{i, 2}= -\lambda_{i2}\gperp \lambda_{i1},\quad
\bphi_{j, 1}= \lambda_{j1}\gperp \lambda_{j2},\quad \mbox{and}\quad
\bphi_{j, 2}= -\lambda_{j2}\gperp \lambda_{j2}.
$$
For barycentric coordinates, $\int_K \lambda_i \lambda_j dx = (1+\delta_{i,j})|K|/12$. 
An easy computation shows that
\begin{eqnarray*}
(\a^{-1} \bphi_{i, 1}, \bphi_{j,1})_K&=& \a^{-1}(1+\delta_{i1, j1})(a_{i2} \cdot a_{j2}+b_{i2}\cdot b_{j2})/(48|K|);\\
(\a^{-1} \bphi_{i, 1}, \bphi_{j,2})_K&=& -\a^{-1}(1+\delta_{i1, j2})(a_{i2} \cdot a_{j1}+b_{i2}\cdot b_{j1})/(48|K|);\\
(\a^{-1} \bphi_{i, 2}, \bphi_{j,2})_K&=& \a^{-1}(1+\delta_{i2, j2})(a_{i1} \cdot a_{j1}+b_{i1}\cdot b_{j1})/(48|K|).
\end{eqnarray*}
For  $k=1,2$, $\gradt \bphi_{i,k} = 1/(2|K|)$ when the $i$-th edge has the same orientation as the global edge, and  $-1/(2|K|)$ otherwise. Thus, denote $s(i)$ be the value of \mcode{signedge(:,i)}, we have
$$
-(\gradt \bphi_{i,1}, 1)_K =-(\gradt \bphi_{i,2}, 1)_K= - s(i)/2.
$$
The local matrix (here we abuse the notations to use local $\phi_{i,k}$, $i=1 \cdots 3$, $k=1,2$ to denote the local BDM basis functions): 
$$
-\left ((\grad \bphi_{1,1}, 1)_K, (\grad \bphi_{2,1}, 1)_K, (\grad \bphi_{3,1}, 1)_K, (\grad \bphi_{1,2}, 1)_K,(\grad \bphi_{2,2}, 1)_K, (\grad \bphi_{3,2}, 1)_K \right)
$$
 is simply $-(s(1), s(2), s(3),s(1),s(2), s(3))/2$, or \mcode{-[signedge(:,1:3), signedge(:,1:3)]} in MATLAB code.
\subsubsection{Assembling global matrices}
For a local index $i$, its corresponding global edge index is \mcode{double(elem2edge(:,i))}. MATLAB function \mcode{sparse} is used to generate the global matrices from local contributions. This is one of the key step to ensure the vectorization of the MATLAB finite element code,  see \cite{Chen:09, FPW:11} for more detailed discussions on  \mcode{sparse}. 

Here are some comments of the MATLAB code \mcode{assemblebdm.m}.
\begin{itemize}
\item Line 1: The function is called by 

 \mcode{A = assemblebdm(NT,NE,a,b,area,elem2edge,signedge,inva)}
 
where \mcode{A} is a \mcode{(2*NE+NT)*(2*NE+NT)} matrix, \mcode{a,b,area} are the coefficients of local barycentric coordinates, \mcode{signedge} is the \mcode{NT*3} matrix about the local and global edge orientations, and \mcode{inva} is \mcode{NT*1} vector of $\a^{-1}$.

\item Lines 4-15: We generates the matrix $B= (\a^{-1} \bsigma_h,\btau_h)$, $\bsigma_h$
 and $\btau_h$ in $\BDM_1$ by assembling local element-wise contributions. 

\item Lines 6-7: We generates the right starting and terminal indices of a global edge in the local element, and their corresponding coefficients of local barycentric coordinates.

\item Lines 8-10: We compute \mcode{E}, \mcode{H}, and \mcode{G}, which are  $(\a^{-1} \bphi_{i, 1}, \bphi_{j,1})_K$, 
$(\a^{-1} \bphi_{i, 1}, \bphi_{j,2})_K$, and $(\a^{-1} \bphi_{i, 2}, \bphi_{j,2})_K$, respectively.

\item Lines 11-13: $B$ is generated by \mcode{sparse}. 

\item Lines 16-20: $C$ is generated by local contributions \mcode{[-signedge(:,1:3), -signedge(:,1:3)]}.
\item Lines 21: \mcode{A} is generated by adding a zero matrix $D$ on $22$ block.
\end{itemize} 
\begin{lstlisting}[float,caption={assemblebdm.m}, label=assemblebdm]
function A = assemblebdm(NT,NE,a,b,area,elem2edge,signedge,inva)
B = sparse(2*NE, 2*NE); C = sparse(NT, 2*NE); D = sparse(NT,NT);
%% Blocal(6,6) = [E(3,3) H(3,3); H'(3,3) G(3,3)]
for i = 1:3
    for j = 1:3
        [i1,i2,ai1,bi1,ai2,bi2] = BDMrightorder(i,signedge,NT,a,b);
        [j1,j2,aj1,bj1,aj2,bj2] = BDMrightorder(j,signedge,NT,a,b);
        E =  inva./(48*area).*(1+(i1==j1)).*(ai2.*aj2+bi2.*bj2);
        H = -inva./(48*area).*(1+(i1==j2)).*(ai2.*aj1+bi2.*bj1);
        G =  inva./(48*area).*(1+(i2==j2)).*(ai1.*aj1+bi1.*bj1);
        ii = double(elem2edge(:,i));       jj = double(elem2edge(:,j));
        B = B + sparse(ii,jj,E,2*NE,2*NE)+sparse(ii,jj+NE,H,2*NE,2*NE)...
            +sparse(jj+NE,ii,H,2*NE,2*NE)+sparse(NE+ii,NE+jj,G,2*NE,2*NE);
    end
end
for i = 1:3
    ii = double(elem2edge(:,i));
    C = C + sparse(1:NT,ii,-signedge(:,i)/2,NT,2*NE)...
        +sparse(1:NT,ii+NE,-signedge(:,i)/2,NT,2*NE);
end
A = [B C';C D];
end
\end{lstlisting}

%\subsection{Right-hand sides and boundary condition} \label{BCrhside}

\subsection{Assembling the force $f$ term}
Since $\gradt \bsigma_h \in P_0$, we only need the numerical integration of the $f$ term to be accurate as if $f$ is a constant on each element. Thus, 
the term related to $-(f,1)_K$ can be computed by a one-point quadrature rule:
$$
-\int_K f dx \approx - f(x_{mid}, y_{mid}) |K|,
$$
where $(x_{mid}, y_{mid})$ are the coordinates of the gravity center of the element $K$. 

\subsection{Generating boundary data}
On the boundary, we need \mcode{sign_D} and \mcode{sign_N} to denote the difference between orientations inherited 
from the local edge ordering of the element and the global edge orientations like \mcode{edgesign}. Since the unit out 
 normal vector of an element on the boundary is the same as the unit out normal  vector of the whole domain,   
\mcode{sign_D} and \mcode{sign_N} %(with mathematical notations $s_D$ and $s_N$ respectively) 
are actually the difference of normal directions of Dirichlet and Neumann edges 
and global out normal directions.

The MATLAB function \mcode{boundary.m} generates the Dirichlet and Neumann edges and their orientations with respect to the global edges.

\begin{itemize}
\item Lines 3-10: We generates un-sorted Dirichlet and Neumann edges and their signs.
\item Lines 11-13: We generates sorted Dirichlet and Neumann edges and their signs.
\end{itemize}
\begin{lstlisting}[float,caption={boundary.m}, label=boundary]
function [Dirichlet, Neumann,sign_D,sign_N] = boundary(elem,bdEdge)
NT = size(elem,1);
totalEdge = [elem(:,[2,3]); elem(:,[3,1]); elem(:,[1,2])];
isBdEdge = reshape(bdEdge,3*NT,1);
Dirichlet = totalEdge((isBdEdge == 1),:);
Neumann = totalEdge((isBdEdge == 2),:);
NE_D = size(Dirichlet,1);               NE_N = size(Neumann,1);
sign_D = ones(NE_D,1);                  sign_N = ones(NE_N,1);
sign_D = sign_D(:,1) - 2* (Dirichlet(:,1)>Dirichlet(:,2));
sign_N = sign_N(:,1) - 2* (Neumann(:,1)>Neumann(:,2));
Dirichlet = sort(Dirichlet,2);          Neumann = sort(Neumann,2);
[Dirichlet,I_d] = sortrows(Dirichlet);  [Neumann,I_n] = sortrows(Neumann);
sign_D = sign_D(I_d);                   sign_N = sign_N(I_n);
end
\end{lstlisting}

We denote the set of indices of edges on the Dirichlet and Neumann boundary to be \mcode{ind_D} 
and \mcode{ind_N}, respectively. 

To computer the term $-(\btau\cdot\bn, g_D)_{\Gamma_D}$, we need to be careful about two things. One is the difference between unit out normal vector of the domain and that of the edge, the other one is the numerical quadrature formula.
In order to guarantee  the convergence order of $\BDM_1$ element, we should use two-point numerical quadrature on 
an edge. The 2-point Gauss-Legendre quadrature of a function $f(x)$ on interval $[a,b]$ is:
\beq\label{GL2}
\int_{a}^b f(x) dx \approx  \dfrac{b-a}{2}\left( f\left(\dfrac{a+b}{2} - \dfrac{b-a}{2\sqrt{3}} \right) 
+ f\left(\dfrac{a+b}{2} + \dfrac{b-a}{2\sqrt{3}} \right)
\right) .
\eeq
Note that on a Dirichlet edge $E_j =\{z_s,z_t\}$, $s<t$, $\bphi_{j,1} \cdot \bn_{E_j} = \lambda_s/|E_j|$ and
$\bphi_{j,2} \cdot \bn_{E_j} = \lambda_t/|E_j|$. Thus, to compute $-\int_{E_j} (\bphi_{j,\ell} \cdot\bn_{\O}) g_D dx$ by
 formula (\ref{GL2}), we need the value of $g_D$ at quadrature points 
$p_1 = \dfrac{n_1+n_2}{2} - \dfrac{n_2-n_1}{2\sqrt{3}}$ and $p_2 = \dfrac{n_1+n_2}{2} + \dfrac{n_2-n_1}{2\sqrt{3}}$, where $n_1$ and $n_2$ are the coordinates of the $z_s$ and $z_t$, respectively. We also need to know the value of 
$\lambda_s$ and $\lambda_t$ at $p_1$ and $p_2$, which are
$$
\lambda_s(p_1) = \lambda_t(p_2) = \dfrac{1}{2}+\dfrac{1}{2\sqrt{3}}\quad\mbox{and}\quad
\lambda_s(p_2) = \lambda_t(p_1) = \dfrac{1}{2}-\dfrac{1}{2\sqrt{3}}.
$$
Thus, by letting $s(j)=$ \mcode{sign_D(j)}, $s(j) = 1$ or $-1$ be the number denoting the difference between 
 the local and global edge orientation on $\Gamma_D$, we have 
\begin{eqnarray*}
-\int_{E_j} (\bphi_{j,1} \cdot\bn_{\O}) g_D dx & \approx & -s(j)\left( g_D(p_1)(\dfrac{1}{4}+\dfrac{1}{4\sqrt{3}})
+g_D(p_2)(\dfrac{1}{4}-\dfrac{1}{4\sqrt{3}})\right), \\
-\int_{E_j} (\bphi_{j,2} \cdot\bn_{\O}) g_D dx & \approx & -s(j) \left( g_D(p_1)(\dfrac{1}{4}-\dfrac{1}{4\sqrt{3}})
+ g_D(p_2)(\dfrac{1}{4}+\dfrac{1}{4\sqrt{3}})\right).
\end{eqnarray*}
To handle the Neumann boundary condition, on each $E_j = \{z_s,z_t \}\in \cE_N$, we want to compute
$\bsigma_N|_{E_j} = c_{j,1} \bphi_{j,1} + c_{j,2}\bphi_{j,2}$. Then 
$\bsigma_N\cdot\bn_{\O}|_{E_j} = c_{j,1} \bphi_{j,1}\cdot\bn_{\O} + c_{j,2}\bphi_{j,2}\cdot\bn_{\O}$. Let $s(j) = 1$ or $-1$ be the number denoting the difference between the local and global edge orientation on $E_j$, we should have 
$$ 
s(j)(c_{j,1} \lambda_s + c_{j,2}\lambda_t)/|E_j| \approx g_N.
$$
Let the $L^2$-projection of $g_N$ on to  $P_1(E_j) =\mbox{span}\{\lambda_s,\lambda_t\}$ to be
$g_{N,h}  = d_s \lambda_s + d_t \lambda_t$:
$$
\left(
\begin{array}{ccc}
(\lambda_s,\lambda_s)_{E_j}  &    (\lambda_t,\lambda_s)_{E_j} \\
(\lambda_s,\lambda_t)_{E_j}  & (\lambda_t,\lambda_t)_{E_j}     
\end{array}
\right)
\left(
\begin{array}{ccc}
d_s \\
d_t\end{array}
\right)
=
\left(
\begin{array}{ccc}
(g_N,\lambda_s)_{E_j} \\
(g_N,\lambda_t)_{E_j} 
\end{array}
\right).
$$
Since $\int_E \lambda_i \lambda_j ds =|E|(1+\delta_{ij}) /6$,  replace $(g_N,\lambda_s)_{E_j}$
and $(g_N,\lambda_t)_{E_j}$, by $I_{j,s}$ and $I_{j,t}$ using the two-point numerical quadrature (\ref{GL2}) as we did for $g_D$, we get
$$
d_s = (4I_{j,s}-I_2{j,t})/|E_j| 
\quad \mbox{and}\quad
d_t = (4I_{j,t}-2I_{j,s})/|E_j|. 
$$
So $\bsigma_N|_{E_j} = c_{j,1} \bphi_{j,1} + c_{j,2}\bphi_{j,2}$ with $c_{j,1}=s(j) (4I_{j,s}-2I_{j,t})$ and
$c_{j,2}=s(j)(4I_{j,t}-2I_{j,s})$, or, in the notation of (\ref{sigmaN}),
$$
x_j = s(j) (4I_{j,s}-2I_{j,t}) \quad \mbox{and} \quad x_{\NE+j} = s(j)(4I_{j,t}-2I_{j,s}).
$$
With this know $\bsigma_N$, then the right-hand side of the discrete problem can be easily handled
as in \cite{ACF:99,Chen:09}.

\begin{lstlisting}[float,caption={rhside.m}, label=rhside]
function [A,b,sol,freeDof]=rhside(node,elem,edge,bdEdge,area,A,sol,f,gD,gN)
NT = size(elem,1);  NE = size(edge,1); 
[Dirichlet, Neumann,sign_D,sign_N] = boundary(elem,bdEdge);
%% Assemble right hand side.
mid = (node(elem(:,1),:)+node(elem(:,2),:)+node(elem(:,3),:))/3;
b2 = accumarray((1:NT)',-f(mid).*area,[NT 1]);
%% Drichelet BC
n1= node(Dirichlet(:,1),:);            n2= node(Dirichlet(:,2),:); 
p1=(n1-n2)/2*sqrt(1/3)+(n2+n1)/2;      p2=(n2-n1)/2*sqrt(1/3)+(n2+n1)/2;
intgDphi1n = (gD(p1)*(1+sqrt(1/3))+gD(p2)*(1-sqrt(1/3)))/4;
intgDphi2n = (gD(p2)*(1+sqrt(1/3))+gD(p1)*(1-sqrt(1/3)))/4;
[useless, ind_D] = intersect(edge, Dirichlet, 'rows');
bb1 = accumarray(ind_D,-sign_D.*intgDphi1n,[NE 1]);
bb2 = accumarray(ind_D,-sign_D.*intgDphi2n,[NE 1]);
b1 = [bb1;bb2];
%% Neumann BC
n1= node(Neumann(:,1),:);              n2= node(Neumann(:,2),:); 
p1=(n1-n2)/2*sqrt(1/3)+(n2+n1)/2;      p2=(n2-n1)/2*sqrt(1/3)+(n2+n1)/2;
edgeLength_N = sqrt(sum((n1-n2).^2,2));
intgNlams =  edgeLength_N.*(gN(p1)*(1+sqrt(1/3))+gN(p2)*(1-sqrt(1/3)))/4;
intgNlamt =  edgeLength_N.*(gN(p2)*(1+sqrt(1/3))+gN(p1)*(1-sqrt(1/3)))/4;
[useless, ind_N] = intersect(edge, Neumann, 'rows');
sol(ind_N) = 2*sign_N.*(2*intgNlams-intgNlamt); 
sol(ind_N+NE) = 2*sign_N.*(2*intgNlamt-intgNlams);
%% modify right hand side
b = [b1;b2];    b = b - A*sol;
%%freeDof
isBdDof = false(2*NE+NT,1); isBdDof(ind_N)=true; isBdDof(ind_N+NE)=true;
freeDof = find(~isBdDof);
end
\end{lstlisting}

The followings are some comments about the function \mcode{rhside.m} of handling the $f$ term and boundary conditions.
\begin{itemize}
\item Lines 5-6: We generate terms related to $-(f,v)$.
\item Lines 8-15: We generate terms related to $-(\btau\cdot\bn, g_D)_{\Gamma_D}$.
\item Lines 17-24: We construct the $\bsigma_N$.
\item Line 26: We handle the right-hand side of the matrix problem.
\item Lines 28-29: We find the degrees of freedom of the matrix problem. (Those terms related to $\bsigma_N$ are known, thus not free.)
\end{itemize}

\subsection{Solving the BDM mixed problem}

The MATLAB function \mcode{diffusiondbm.m} is the principle function to solve the BDM mixed problem. With all the building blocks given before, it is relatively easy. Line 5 is used to got $\a^{-1}$ in each element. All the rest lines are self-explanatory.  

\begin{lstlisting}[float,caption={diffusionbdm.m}, label=diffusionbdm]
function [sigma,u] = diffusionbdm(node,elem,bdEdge,elem2edge,edge,...
    signedge,exactalpha,f,gD,gN)
NT = size(elem,1);  NE = size(edge,1); 
sol = zeros(2*NE+NT,1);
inva =1./exactalpha((node(elem(:,1))+node(elem(:,2))+node(elem(:,3)))/3); 
[a,b,area] = gradlambda(node,elem);
A = assemblebdm(NT,NE,a,b,area,elem2edge,signedge,inva);
[A,b,sol,freeDof] = rhside(node,elem,edge,bdEdge,area,A,sol,f,gD,gN);
sol(freeDof) = A(freeDof,freeDof)\b(freeDof);
sigma = sol(1:2*NE);    u = sol(2*NE+1:end);
end
\end{lstlisting}

\section{Checking errors}
The final step of a numerical test is often the convergence test by computing some norms of the error between the 
exact and numerical solutions obtained for different mesh sizes. In our case, we need to compute 
$$
\|\a^{-1/2}(\bsigma- \bsigma_h)\|_0 \quad \mbox{and} \quad 
\|u- u_h\|_0.
$$

For the both  terms $\|\a^{-1/2}(\bsigma- \bsigma_h)\|_0$ and $\|u- u_h\|_0$, the direct way to compute them is summing up the errors on each element by direct computations. 
To lower the numerical quadrature order, the first step should be 
\begin{eqnarray*}
\|\a^{-1/2}(\bsigma-\bsigma_h)\|_0^{2} & =  &
(\a^{-1}(\bsigma-\bsigma_h),\bsigma-\bsigma_h) = (\a^{-1}\bsigma,\bsigma)-2(\a^{-1}\bsigma,\bsigma_h)+(\a^{-1}\bsigma_h,\bsigma_h),\\
\|u-u_h\|_0^{2} & =  &
(u-u_h,u-u_h) = (u,u)-2 (u,u_h)+ (u_h,u_h).
\end{eqnarray*}

The terms $(\a^{-1}\bsigma,\bsigma)$ and $(u,u)$ can be computed exactly by softwares like Mathematica and Maple. In our example, $(\a^{-1}\bsigma,\bsigma) =1993/75 \approx 26.5733$ and $(u,u) = 18131/7500 \approx 2.41747$. The terms $(\a^{-1}\bsigma_h,\bsigma_h)$ and $(u_h,u_h)$ can also be computed exactly. 
For the terms $(\a^{-1}\bsigma,\bsigma_h)$ and $(u_h,u_h)$, numerical quadratures must be used. 

Since this part of codes is less important, we only give brief comments about the functions we used. 
The MATLAB function \mcode{exactsigma.m} returns the exact value of $\bsigma$.
The  function \mcode{sigmahBDM.m}  returns the values of $\bsigma_h$ at those numerical quadrature points.
The  function \mcode{errorBDM.m} is used to compute the errors. Here, we use a 6-point quadrature to compute them. 
Matrix \mcode{xw} is a $6*3$ matrix, with \mcode{xw(:,1:2)} are the coordinates such that the quadrature point is 
\mcode{p=p1*(1-xw(i,1)-xw(i,2))+p2*xw(i,1)+p3*xw(i,2)}, where $p1$, $p2$, and $p3$ are the coordinates of three vertices;  \mcode{xw(:,3)} is the corresponding weight for the point.

\begin{lstlisting}[float,caption={exactsigma.m}, label=exactsigma]
function [sigma1,sigma2] = exactsigma(p)
x = p(:,1); y = p(:,2);
sigma1=-2*x.*y.*y-1;
sigma2=(x<0).*(-2*x.*x.*y-10)+(x>=0).*(-2*x.*x.*y-1);
end
\end{lstlisting}

\begin{lstlisting}[float,caption={sigmahBDM.m}, label=sigmahBDM]
function [sigmah1,sigmah2] = ...
    sigmahBDM(elem,node,NE,elem2edge,signedge,w1,w2,sigma)
[a,b,area] = gradlambda(node,elem); NT = size(elem,1);
lambda_at_w = [1-w1-w2,w1,w2];
sigmah1 = zeros(NT,1);          sigmah2 = zeros(NT,1);
for i = 1:3
    [i1,i2,ai1,bi1,ai2,bi2] = BDMrightorder(i,signedge,NT,a,b);
    ii = double(elem2edge(:,i));
    sigmah1 = sigmah1+sigma(ii).*lambda_at_w(i1)'.*bi2./area/2 ...
        -sigma(ii+NE).*lambda_at_w(i2)'.*bi1./area/2;
    sigmah2 = sigmah2-sigma(ii).*lambda_at_w(i1)'.*ai2./area/2 ...
        +sigma(ii+NE).*lambda_at_w(i2)'.*ai1./area/2;    
end
end
\end{lstlisting}

\begin{lstlisting}[float,caption={errorBDM.m}, label=errorBDM]
function [error_sigma, error_u] = errorBDM(node,elem,NE,area,elem2edge,...
    signedge,inva,sigma,u,ss,uu)
n1 = elem(:,1);  n2 = elem(:,2);  n3 = elem(:,3);
p1 = node(n1,:); p2 = node(n2,:); p3 = node(n3,:);
xw=[0.44594849091597 0.44594849091597 0.22338158967801;...
    0.44594849091597 0.10810301816807 0.22338158967801;...
    0.10810301816807 0.44594849091597 0.22338158967801;...
    0.09157621350977 0.09157621350977 0.10995174365532;...
    0.09157621350977 0.81684757298046 0.10995174365532;...
    0.81684757298046 0.09157621350977 0.10995174365532];
NP =size(xw,1); NT = size(elem,1);
uuhlocal = zeros(NT,1); % (u, u_h)_K
sshlocal = zeros(NT,1); % (\a^^{-1}sigma, sigma_h)_K
shshlocal = zeros(NT,1);% (\a^^{-1}sigma_h, sigma_h)_K
for i=1:NP
    p=p1*(1-xw(i,1)-xw(i,2))+p2*xw(i,1)+p3*xw(i,2);
    uuhlocal = uuhlocal + exactu(p)*xw(i,3);
    [sigma1,sigma2] = exactsigma(p);
    [sigmah1,sigmah2] = ...
        sigmahBDM(elem,node,NE,elem2edge,signedge,xw(i,1),xw(i,2),sigma);
    sshlocal = sshlocal + (sigma1.*sigmah1+sigma2.*sigmah2)*xw(i,3);
    shshlocal = shshlocal + (sigmah1.*sigmah1+sigmah2.*sigmah2)*xw(i,3);
end
uuhlocal = u.*uuhlocal.*area;           uhuhlocal = u.*u.*area;
sshlocal = inva.*sshlocal.*area;        shshlocal = inva.*shshlocal.*area;
error_u = sqrt(abs(uu-2*sum(uuhlocal)+sum(uhuhlocal)));
error_sigma = sqrt(abs(ss-2*sum(sshlocal)+sum(shshlocal)));
end\end{lstlisting}

We use the code listed in Listing \ref{checkingerrors} to compute the error.

\begin{lstlisting}[float, caption={checking errors}, label=checkingerrors]
uu=18131/7500; ss=1993/75;
[error_sigma, error_u] = errorBDM(node,elem,NE,area,elem2edge,...
    signedge,inva,sigma,u,ss,uu);
\end{lstlisting}

The original mesh  given has mesh size $h=1$. We refine the mesh uniformly several times, for example, using  \mcode{[node,elem,bdEdge]=uniformbisect(node,elem,bdEdge)}, where \mcode{uniformbisect} is a uniform refinement MATLAB function given in the package $i$FEM \cite{Chen:09}. We compute the errors for different mesh sizes, and have the Table \ref{error}. We get
$$
\dfrac{
\|\a^{-1}(\bsigma-\bsigma_h)\|_0}
{\|\a^{-1}(\bsigma-\bsigma_{h/2})\|_0} \approx 4 \quad\mbox{and}\quad
\dfrac{
\|u-u_h\|_0}
{\|u-u_{h/2}\|_0} \approx 2.
$$ 
This result is  in agreement with the a priori error esteems (\ref{apriori}).

\begin{table}[htdp]
\caption{Errors of $\bsigma$ and $u$ on different mesh sizes}
\begin{center}
\begin{tabular}{|c |c | c | c| c | }
\hline
$h$ & $\|\a^{-1}(\bsigma-\bsigma_h)\|_0$ &  $\dfrac{\|\a^{-1}(\bsigma-\bsigma_h)\|_0}
{\|\a^{-1}(\bsigma-\bsigma_{h/2})\|_0} $ & $\|u-u_h\|_0$ & $\dfrac{
\|u-u_h\|_0}
{\|u-u_{h/2}\|_0}$ \\
\hline
1 &     1.6968e-01  &   &       4.9712e-01  &  \\
\hline
1/2 &        4.2091e-02  & 4.0314 &     2.4400e-01 &   2.0374 \\
\hline
1/4 & 1.0600e-02 &  3.9707 &  1.2118e-01 & 2.0135   \\
\hline
1/8 &    2.6630e-03 & 3.9805&    6.0481e-02 &  2.0037 \\
\hline
1/16 & 6.6739e-04 &  3.9901 &    3.0226e-03 &   2.0009 \\
\hline
1/32 & 1.6705e-04 & 3.9952 & 1.5111e-03 & 2.0002\\
\hline
1/64 & 4.1788e-05 & 3.9975 &    7.5555e-04 &   2.0001\\
\hline
\end{tabular}
\end{center}
\label{error}
\end{table}%

\section{Related finite elements}\label{RT_Nedlec}
\setcounter{equation}{0}

Once we know how to programming $\BDM_1$ methods, we can actually write codes for similar finite elements, with two things in mind: Writing basis functions in barycentric coordinates and correcting the local edge orientation.  For more higher order elements, we may use 
Legendre polynomials  instead of Lagrange polynomials, but the idea is similar.
\subsection{Other H(div)-conforming edge-based basis in 2D}
%
%On $E_{\ell} = \{z_s, z_t\}$, as computed in , we have
%\beq \label{}
%\gperp \lambda_s|_{E_{\ell}} = -\gperp \lambda_t|_{E_{\ell}} \quad\mbox{and}\quad
%\gperp \lambda_t \cdot \bn_{E_{\ell}}|_{E_{\ell}} =-\gperp \lambda_s \cdot \bn_{E_{\ell}}|_{E_{\ell}} = 1/|E_{\ell}|.
%\eeq

For $RT_0$ element, as mentioned in Remark \ref{BDMRT}, its basis function on an edge can be written as
$$
\bphi^{rt}_{\ell} = \lambda_s \gperp \lambda_t - \lambda_t \gperp \lambda_s
$$
Actually, it is insensitive to the staring and terminal vertices: 
$$
\bphi^{rt}_{\ell} = \lambda_s \gperp \lambda_t - \lambda_t \gperp \lambda_s = 
\lambda_t \gperp \lambda_s - \lambda_s \gperp \lambda_t,
$$
and on adjacent elements $K^- =\{z_{r-}, z_s, z_t\}$ and $K^+=\{z_{r+}, z_t, z_s\}$,
$$
\bphi^{rt}_{\ell}|_{K^-} = \dfrac{1}{2|K^-|} 
\left(
\begin{array}{ccc}
x-x_{r-} \\
y-y_{r-}
\end{array}
\right)
\quad\mbox{and}\quad
\bphi^{rt}_{\ell}|_{K^+} =- \dfrac{1}{2|K^+|} 
\left(
\begin{array}{ccc}
x-x_{r+} \\
y-y_{r+}
\end{array}
\right).
$$
Thus lines 4-5 of \mcode{BDMrightorder} is unnecessary for $RT_0$ elements.

For all $RT_k$ and $\BDM_k$ spaces, basis functions are divided into two categories, edge based functions and 
element-based functions. The element based functions are usually easy to construct since they are only non-zero in 
one element, see \cite{BBF:13}. So we will only discuss edge based basis functions. Both $RT_k$ and $\BDM_k$ spaces have $k+1$ basis functions on each edge. We only need to functions to span $P_k(E_{\ell})$ on $E_{\ell}$.
For $\BDM_2$ and $RT_2$, three edge basis functions on $E_{\ell}$ are
$$
\bphi^{1}_{\ell,1} = \lambda_s^2  \gperp \lambda_t, \quad
\bphi^{1}_{\ell,2} = \lambda_s \lambda_t (\gperp \lambda_t - \gperp \lambda_s)  , \quad\mbox{and}\quad
\bphi^{1}_{\ell,3} = -\lambda_t^2 \gperp \lambda_s.
$$
That is, we use  $\lambda_s^2$, $\lambda_t^2$, and $\lambda_s\lambda_t$ to span $P_2$ on $E_{\ell}$. We can use other choices, for example, Legendre polynomials for high order spaces. With this observation, the code developed in this paper can be easily adapted  to these spaces.

\subsection{N\'ed\'elec spaces in 2D}
For N\'ed\'elec spaces, the 1st and 2nd types  N\'ed\'elec  spaces correspond to their $H(\divvr)$ counterparts are $RT$ and $BDM$ spaces, respectively. We also only need to discuss the construction of basis functions on edges.
By (\ref{perpnormal}), the zero-moment   $H(\mbox{curl})$ edge basis function is
$$
\bpsi^{ned}_{\ell} = \lambda_s \grad \lambda_t - \lambda_t \grad \lambda_s.
$$
The linear $H(\mbox{curl})$ edge basis functions are
$$
\bpsi_{\ell,1}^{ned} = \lambda_s \grad \lambda_t \quad \mbox{and}\quad
\bpsi_{\ell,2}^{ned} = - \lambda_t \grad \lambda_s.
$$
\subsection{$H(\divvr)$ and $H(\mbox{curl})$ and  basis functions in 3D} For a tetrahedral mesh, $H(\divvr)$ basis functions are defined on faces. Like the
\mcode{edge} structure in this paper, we should define a \mcode{face} matrix. If $F_{\ell} = \{z_r, z_s, z_t\}$, we need ensure $r<s<t$, and 
choose the normal direction such that the area of $\{z_r, z_s, z_t\}$ is positive. Once this is done, we can do similar things as in 2D.
The $BDM_1$ basis functions in 3D on a face $F_{\ell} = \{z_r, z_s, z_t\}$ are 
$$
\lambda_r \grad \lambda_s \times \grad \lambda_t, \quad \lambda_s \grad \lambda_t \times \grad \lambda_r, \quad
\mbox{and} \quad \lambda_t \grad \lambda_r \times \grad \lambda_s.
$$
Its $RT_0$ basis function on $F$ is 
$$
\lambda_r \grad \lambda_s \times \grad \lambda_t+\lambda_s \grad \lambda_t \times \grad \lambda_r+ \lambda_t \grad \lambda_r \times \grad \lambda_s.
$$
For three dimensionalal $H(\mbox{curl})$ space, basis functions in barycentric coordinates can be found in \cite{GDG:05}.
%\section{Appendix}
%
%\begin{table}[htdp]
%\caption{Mathematical and MATLAB notations}
%\begin{center}
%\begin{tabular}{|l|l|l|}
%\hline
%Description & Math notation & MATLAB notation\\
%\hline
%$i$-th vertex& $z_i = [x_i,y_i]$ &  \mcode{node(i,:) = [xi, yi]} \\
%\hline
%$m$-th element with 3 vertices & $K_m= [z_i,z_j,z_k]$ &  \mcode{elem(m,:) = [i j k]} \\
%\hline
%$m$-th edge with 2 vertices ($i<j$)& $E_m= [z_i,z_j]$ &  \mcode{edge(m,:) = [i j]} \\
%\hline
%sign of  $i$-th edge of an element ($i=1\cdots 3$) & $s(i)$ &  \mcode{signedge(:,i)} \\
%\hline
%\end{tabular}
%\end{center}
%\label{default}
%\end{table}%

\end{document}